\newcounter{Steps}
\title{{\huge Constructing Majorana Representations}}
\date{}
\author[1]{Markus Pfeiffer}
\affil[1]{\small{School of Computer Science, University of St Andrews, KY16 9SX markus.pfeiffer@st-andrews.ac.uk}}
\author[2]{Madeleine Whybrow}
\affil[2]{\small{Department of Mathematics, Imperial College London, SW7 2AZ madeleine.whybrow10@imperial.ac.uk}}
\newtheorem{thm}{Theorem}[section]
\newtheorem{defn}[thm]{Definition}
\newtheorem{lem}[thm]{Lemma}
\newtheorem{prop}[thm]{Proposition}
\newcommand{\ml}[2]{\begin{tabular}{@{} >{$}#1<{$} @{}} #2 \end{tabular}}
\newcommand{\thickhline}{%
    \noalign {\ifnum 0=`}\fi \hrule height 1pt
    \futurelet \reserved@a \@xhline
}
\newcolumntype{"}{@{\hskip\tabcolsep\vrule width 1.25pt\hskip\tabcolsep}}
\begin{document}

\begin{center}
    \begin{minipage}{13cm}
        \maketitle 
    \end{minipage}
\end{center}

\begin{abstract}
Majorana theory was introduced by A.~A.~Ivanov as an axiomatic framework in which to study objects related to the Monster simple group and the Griess algebra. Since its inception, it has been used to construct a number of new and important subalgebras of the Griess algebra. The objects at the centre of the theory are known as Majorana algebras and can be studied either in their own right, or as Majorana representations of finite groups. 

In this paper, we present an algorithm to construct the Majorana representations of a given group. We also list a number of groups for which we have constructed Majorana representations, including some which give new examples of Majorana algebras. This work is inspired by that of \'{A}.~Seress.
\end{abstract}

\section{Introduction}

In \cite{Sakuma07}, S.~Sakuma reproved certain important properties of the Monster simple group and the Griess algebra in the framework of vertex operator algebras. Inspired by this result, A.~A.~Ivanov introduced Majorana theory in \cite{Ivanov09} as an axiomatisation of certain properties of the $2A$ axes of the Griess algebra $V_{\mathbb{M}}$. 

The objects at the centre of the theory are \emph{Majorana algebras}, real, commutative, non-associative algebras generated by idempotents known as \emph{Majorana axes}. To each Majorana axis, we can associate a unique involution in the automorphism group of the algebra, known as a \emph{Majorana involution}. The link between Majorana axes and their corresponding involutions forms a strong relationship between Majorana algebras and groups. In particular, most Majorana algebras have been constructed as \emph{Majorana representations} of finite groups. 

One of the main aims of the theory is to provide a framework in which to construct large subalgebras of the Griess algebra. In particular, suppose that, for a given group $G$, there exists an embedding $\iota$ of $G$ into $\mathbb{M}$ and a generating set $T$ of involutions of $G$ such that $\iota(T)$ is contained in the $2A$ class of involutions of $\mathbb{M}$. Then $G$ must admit a Majorana representation that is isomorphic to the subalgebra of $V_{\mathbb{M}}$ generated by the $2A$ axes corresponding to the $2A$ involutions $\iota(T)$. 

Many important subalgebras of the Griess algebra have been constructed in this way, including algebras corresponding to the groups $S_4$ \cite{IPSS10}, $A_5$ \cite{ISe12} and $L_3(2)$ \cite{ISh12}. However, a crucial turning point came in 2012 when \'{A}.~Seress \cite{Seress12} announced the implementation of an algorithm to construct the Majorana representations of a given group. He also gave a list of groups for which he had constructed Majorana representations including many new and important examples such as the groups $L_2(11)$ and $M_{11}$. 

Sadly Seress passed away shortly after the publication of this paper. As far as we are aware, neither his code, nor the full details of the representations which he constructed were ever recovered. As such, reconstructing his algorithm and repeating his results has been an important aim of the theory ever since.

We have used GAP \cite{GAP16} to implement and run an algorithm to construct Majorana representations and have completely recovered Seress' results. Our algorithm is based largely on Seress' methods but we have made some improvements in the implementation. In particular, certain changes have enabled us to extend the algorithm to be able to construct generic representations, rather than just the $2$-closed examples. 

We first present some basic results and definitions from Majorana theory before detailing the implementation of the algorithm. Finally, we give some details of the certain representations which we have constructed, including two new $3$-closed algebras. All of our work can be found at \texttt{https://github.com/mwhybrow92/MajoranaAlgebras}.

\section{Majorana theory}

Let $V$ be a real vector space equipped with an inner product $(,)$ and a commutative, non-associative product $\cdot$ such that
\begin{description}

\item[M1] $( \, , \, )$  associates with $\cdot$ in the sense that 
\[
(u , v \cdot w ) = (u \cdot v, w ) 
\]
for all $u,v,w \in V$;

\item[M2] the Norton Inequality holds so that  
\[
(u \cdot u, v \cdot v) \geq (u \cdot v, u \cdot v)
\]
for all $u,v \in V$.

\end{description} 

Let $A$ be a subset of $V \backslash \{0\}$ and suppose that for every $a \in A$ the following conditions ((M3) to (M7)) hold. 

\begin{description}

\item[M3] $(a,a) = 1$ and $a \cdot a = a$, so that the elements of $A$ are idempotents of length 1;

\item[M4] $V = V_1^{(a)} \oplus V_0^{(a)} \oplus V_{\frac{1}{2^2}}^{(a)} \oplus V_{\frac{1}{2^5}}^{(a)}$ where $V_{\mu}^{(a)} = \{ v \, : \, v \in V, \, a \cdot v = \mu v\}$ is the set of $\mu$-eigenvectors of the adjoint action of $a$ on $V$;

\item[M5] $V_1^{(a)} = \{ \lambda a \, : \, \lambda \in \mathbb{R} \}$;

\item[M6] the linear transformation $\tau(a)$ of $V$ defined via 

\[
\tau(a): u \mapsto (-1)^{2^5 \mu}u
\]

for $u \in V_{\mu}^{(a)}$ with $\mu \in \{ 1,0,\frac{1}{2^2}, \frac{1}{2^5}\}$, preserves the algebra product (i.e. $u^{\tau(a)} \cdot v^{\tau(a)} = (u \cdot v)^{\tau(a)}$ for all $u, v \in V$);

\item[M7] if $V_+^{(a)}$ is the centraliser of $\tau(a)$ in $V$, so that $V_+^{(a)} = V_1^{(a)} \oplus V_0^{(a)} \oplus V_{\frac{1}{2^2}}^{(a)}$, then the linear transformation $\sigma(a)$ of $V_+^{(a)}$ defined via 
\[
\sigma(a) : u \mapsto (-1)^{2^2 \mu}u
\]  
for $u \in V_{\mu}^{(a)}$ with $\mu \in \{ 1,0,\frac{1}{2^2} \}$, preserves the restriction of the algebra product to $V_+^{(a)}$ (i.e. $u^{\sigma(a)} \cdot v^{\sigma(a)} = (u \cdot v)^{\sigma(a)}$ for all $u,v \in V_+^{(a)}$. 

\end{description}

\begin{defn}
\label{defn:majorana}
The elements of $A$ are called \emph{Majorana axes} while the automorphisms $\tau(a)$ are called \emph{Majorana involutions}. A real, commutative, non-associative algebra $(V, \cdot, (\, , \, ))$ is called a \emph{Majorana algebra} if it satisfies axioms (M1) and (M2) and is generated by a set of Majorana axes.
\end{defn}

A Majorana algebra $V$ generated by a set $A$ of Majorana axes is invariant under the group of automorphisms generated by the Majorana involutions $\tau(a)$ for $a \in A$. This idea is axiomatised in the following definition. 

\begin{defn}
\label{defn:majoranarep}
A \emph{Majorana representation} is a tuple
\[
\textbf{R} = (G,T,V,\cdot, (,), \varphi, \psi)
\]
where 
\begin{itemize}
	\item $G$ is a finite group;
	\item $T$ is a $G$-invariant set of generating involutions of $G$;
	\item $V$ is a real vector space equipped with an inner product $(\, , \, )$ and bilinear, commutative, non-associative algebra product $\cdot$ satisfying (M1) and (M2) such that $V$ is generated by a set $A$ of Majorana axes;
	\item $\varphi: G \rightarrow GL(V)$ is a linear representation that preserves both products (i.e. for all $g \in G$ and for all $u, v \in V$, $(u^{\varphi(g)}, v^{\varphi(g)}) = (u,v)$ and $u^{\varphi(g)} \cdot v^{\varphi(g)} = (u \cdot v)^{\varphi(g)}$);
	\item $\psi: T \rightarrow A$ is a bijective mapping such that
	\[
	\psi(t^g) = \psi(t)^{\varphi(g)}.
	\] 
\end{itemize}
\end{defn}
Later in the text, we usually denote a Majorana representation by the triple $(G,T,V)$ wherever it is clear which products and maps are being used.
\begin{defn}
 Let $V$ be a Majorana algebra and let $A$ be the Majorana axes of $V$. For $X \subseteq V$, we denote by $\langle X \rangle$ the subspace of $V$ generated by $X$ and by $\langle \langle X \rangle \rangle$ the subalgebra of $V$ generated by $X$. Moreover, if $X \subseteq A$ then we say that the subalgebra $\langle \langle X \rangle \rangle$ is $k$-\emph{closed} if it is the linear span of the set
\[
\{x_1 \cdot x_2 \cdot \ldots \cdot x_k \, : \, x_i \in X \}.
\]
\end{defn}

In the previous definition, as the algebra product is not necessarily associative, we must specify that the notation $x_1 \cdot x_2 \cdot \ldots \cdot x_k$ refers to the algebra products obtained from \emph{all} possible combinations of brackets on the elements. For example, if $k = 3$, we include $x_1 \cdot (x_2 \cdot x_3)$ as well as $(x_1 \cdot x_2) \cdot x_3$. 

\begin{defn} 
The \emph{nullspace} of a Majorana algebra $V$ is the ideal
\[
N(V) := \{u \in V \mid u\cdot v = 0, \; \forall v \in V\}.
\]  
\end{defn}

The following results can be deduced from axioms (M1) to (M5).

\begin{lem}
\label{lem:orthogonality}
The eigenspace decomposition in (M4) is orthogonal with respect to $(\, , \, )$ (i.e. $(u,v) = 0$ for all $u \in V_{\mu}^{(a)}$, $v \in V_{\nu}^{(a)}$ with $\mu, \nu \in \{ 1, 0, \frac{1}{2^2}, \frac{1}{2^5}\}$ and $\mu \neq \nu$).
\end{lem}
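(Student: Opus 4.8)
The plan is to exploit the single fact that, by axiom (M1) together with the commutativity of the product, multiplication by the axis $a$ is a self-adjoint operator on $V$ with respect to the inner product. The lemma then becomes an instance of the classical observation that eigenvectors of a self-adjoint operator belonging to distinct eigenvalues are orthogonal, so the whole argument is a short computation rather than anything structural.

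Concretely, I would fix $\mu \neq \nu$ in $\{1,0,\tfrac{1}{2^2},\tfrac{1}{2^5}\}$ and take arbitrary $u \in V_\mu^{(a)}$ and $v \in V_\nu^{(a)}$, so that $a \cdot u = \mu u$ and $a \cdot v = \nu v$ by the definition of the eigenspaces in (M4). The first step is to establish the self-adjointness identity $(a \cdot u, v) = (u, a \cdot v)$. This follows by applying (M1) in the form $(v, a \cdot u) = (v \cdot a, u)$, then using the symmetry of the inner product and the commutativity of $\cdot$ to rewrite $(v, a \cdot u) = (a \cdot u, v)$ and $(v \cdot a, u) = (a \cdot v, u) = (u, a \cdot v)$. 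The only care needed here is the bookkeeping: one must keep track of which arguments play which role in (M1) and use commutativity precisely at the point where $v \cdot a$ is converted to $a \cdot v$.

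The second step is to substitute the eigenvalue relations into this identity. The left-hand side becomes $(a \cdot u, v) = (\mu u, v) = \mu (u,v)$ and the right-hand side becomes $(u, a \cdot v) = (u, \nu v) = \nu (u,v)$, using bilinearity of the inner product. Equating the two gives $\mu (u,v) = \nu (u,v)$, that is, $(\mu - \nu)(u,v) = 0$. Since $\mu \neq \nu$ by hypothesis, the scalar $\mu - \nu$ is nonzero, and hence $(u,v) = 0$. As $u$ and $v$ were arbitrary in their respective eigenspaces, this proves that $V_\mu^{(a)}$ and $V_\nu^{(a)}$ are orthogonal, which is exactly the claim.

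I do not anticipate a genuine obstacle in this proof: the argument uses only (M1), (M4), commutativity, and the symmetry and bilinearity of the inner product, all of which are available. If anything, the one place to be deliberate is the passage from $V$-level axioms to the eigenspace statement, ensuring that (M4) is invoked only to supply the eigenvalue equations and that no appeal to (M5) or to the nondegeneracy of the inner product is smuggled in, since the conclusion $(u,v)=0$ is forced purely algebraically by $\mu \neq \nu$.
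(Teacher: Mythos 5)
Your proof is correct and is exactly the standard self-adjointness argument that the paper intends when it states the lemma ``can be deduced from axioms (M1) to (M5)'' without further comment: axiom (M1) together with commutativity makes the adjoint action of $a$ self-adjoint, and eigenvectors for distinct eigenvalues of a self-adjoint operator are orthogonal. Your closing remark is also accurate --- only (M1), (M4), bilinearity, and symmetry are needed, and no appeal to (M5) or positive-definiteness is required.
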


\begin{lem}
\label{lem:projection}
Let $V$ be a real vector space satisfying the axioms (M1) to (M5), and let $a$ be a Majorana axis of $V$. Then for all $v \in V$, the projection of $v$ onto the eigenspace $V_1^{(a)}$ is $(a,v)a$. 
\end{lem}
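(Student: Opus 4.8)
The plan is to decompose $v$ according to the eigenspace decomposition of (M4) and then read off the $V_1^{(a)}$-component directly, exploiting the orthogonality of that decomposition established in Lemma~\ref{lem:orthogonality}. First I would write, using (M4),
\[
v = v_1 + v_0 + v_{\frac{1}{2^2}} + v_{\frac{1}{2^5}}, \qquad v_\mu \in V_\mu^{(a)},
\]
so that by definition the projection of $v$ onto $V_1^{(a)}$ is exactly $v_1$. By (M5) the eigenspace $V_1^{(a)}$ is one-dimensional and spanned by $a$, whence $v_1 = \lambda a$ for a unique scalar $\lambda \in \mathbb{R}$. It therefore suffices to identify this scalar and show that $\lambda = (a,v)$.

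The key step is then to compute the inner product $(a,v)$ term by term against the decomposition above. Note first that $a \in V_1^{(a)}$: this is immediate from $a \cdot a = a$ in (M3), which exhibits $a$ as a $1$-eigenvector of the adjoint action of $a$. Consequently Lemma~\ref{lem:orthogonality} applies with $a$ in the role of a pure $1$-eigenvector, giving $(a, v_\mu) = 0$ for every eigenvalue $\mu \neq 1$. All off-diagonal contributions thus vanish, leaving $(a,v) = (a, v_1) = (a, \lambda a) = \lambda (a,a)$. Invoking the normalisation $(a,a) = 1$ from (M3) yields $(a,v) = \lambda$, and hence the projection equals $v_1 = \lambda a = (a,v)a$, as required.

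I do not expect a genuine obstacle here: the result is essentially a direct consequence of the orthogonality of the eigenspace decomposition together with the one-dimensionality and normalisation of $V_1^{(a)}$, both already available. The only point demanding a moment's care is verifying that $a$ itself lies in $V_1^{(a)}$ before applying Lemma~\ref{lem:orthogonality}, which is exactly what the idempotent condition in (M3) guarantees.
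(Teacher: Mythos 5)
Your proof is correct and is exactly the standard deduction the paper has in mind when it says these lemmas "can be deduced from axioms (M1) to (M5)": decompose $v$ via (M4), use (M3) to place $a$ in $V_1^{(a)}$ so that Lemma~\ref{lem:orthogonality} kills the other components of $(a,v)$, and finish with (M5) and $(a,a)=1$. No gaps; the one subtlety you flag (checking $a \in V_1^{(a)}$ before invoking orthogonality) is handled properly.
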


\begin{lem}
\label{lem:32evecs}
Let $V$ be a Majorana algebra and let $a \in V$ be a Majorana axis. If $v \in V$ then $v - v^{\tau(a)} \in V_{\frac{1}{2^5}}^{(a)}$.
\end{lem}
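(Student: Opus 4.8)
The plan is to prove the statement directly from the eigenspace decomposition (M4) together with the explicit definition of the map $\tau(a)$ in (M6); notably, neither the product axioms (M1)--(M2) nor any algebraic structure beyond the eigenvalue data is required.

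First I would invoke (M4) to write an arbitrary $v \in V$ uniquely as a sum of eigenvectors,
\[
v = v_1 + v_0 + v_{\frac{1}{2^2}} + v_{\frac{1}{2^5}}, \qquad v_\mu \in V_\mu^{(a)},
\]
which is legitimate since the four eigenspaces form a direct sum. By (M6), $\tau(a)$ is a linear transformation of $V$ acting on each $V_\mu^{(a)}$ by the scalar $(-1)^{2^5\mu}$, so applying it termwise gives
\[
v^{\tau(a)} = \sum_{\mu} (-1)^{2^5\mu}\, v_\mu .
\]

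The key computation is then to evaluate the sign $(-1)^{2^5\mu}$ for each of the four admissible eigenvalues. Here $2^5\mu$ equals $32$, $0$ and $8$ for $\mu = 1, 0, \frac{1}{2^2}$ respectively, all even, so the scalar is $+1$ on $V_1^{(a)} \oplus V_0^{(a)} \oplus V_{\frac{1}{2^2}}^{(a)}$; whereas $2^5 \cdot \frac{1}{2^5} = 1$ is odd, so the scalar is $-1$ on $V_{\frac{1}{2^5}}^{(a)}$. Subtracting therefore cancels every component except the last, giving
\[
v - v^{\tau(a)} = 2\, v_{\frac{1}{2^5}} .
\]
Since $V_{\frac{1}{2^5}}^{(a)}$ is a subspace it is closed under scalar multiplication, so $2\, v_{\frac{1}{2^5}} \in V_{\frac{1}{2^5}}^{(a)}$, which is exactly the claim.

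There is no genuine obstacle here: the result is a one-line consequence of unwinding the definitions, and the only point warranting care is the parity bookkeeping in computing $(-1)^{2^5\mu}$, which is precisely engineered so that $\tau(a)$ is the identity on the centraliser $V_+^{(a)}$ of (M7) and acts as $-1$ exactly on the $\frac{1}{2^5}$-eigenspace. It is worth recording explicitly that $\tau(a)$ is well-defined on all of $V$ because (M4) guarantees the decomposition is a genuine direct sum, so the value of $\tau(a)$ on $v$ is independent of any choices.
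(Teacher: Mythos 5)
Your proof is correct and is exactly the intended argument: the paper omits a proof precisely because the claim follows immediately from decomposing $v$ via (M4) and noting that $\tau(a)$, as defined in (M6), acts as $+1$ on $V_1^{(a)} \oplus V_0^{(a)} \oplus V_{\frac{1}{2^2}}^{(a)}$ and as $-1$ on $V_{\frac{1}{2^5}}^{(a)}$, so that $v - v^{\tau(a)} = 2v_{\frac{1}{2^5}}$. Your parity bookkeeping and the remark that well-definedness rests on the directness of the sum in (M4) are both accurate.
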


The following lemma is referred to in the literature as the \emph{fusion rules} of Majorana algebras. 

\begin{lem}
\label{lem:fusion}
For a fixed Majorana axis $a$, if $u \in V_{\mu}^{(a)}$, $v \in V_{\nu}^{(a)}$ then the product $u \cdot v$ lies in the sum of eigenspaces with corresponding eigenvalues given by the \mbox{$(\mu,\nu)$-th} entry of Table \ref{tab:fusion}.
\end{lem}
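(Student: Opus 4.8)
The plan is to read off all entries of Table~\ref{tab:fusion} from three ingredients: the explicit description $V_1^{(a)} = \mathbb{R}a$ in (M5), the two $\mathbb{Z}/2$-gradings induced by the automorphisms $\tau(a)$ and $\sigma(a)$ from (M6) and (M7), and a single projection computation using (M1). Throughout I fix the Majorana axis $a$ and abbreviate $V_\mu = V_\mu^{(a)}$, writing the eigenvalues as $1,0,1/4,1/32$.

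First I would dispose of every entry in the row and column indexed by $\mu = 1$. By (M5) any $u \in V_1$ has the form $u = \lambda a$, so for $v \in V_\nu$ we get $u \cdot v = \lambda(a \cdot v) = \lambda \nu v \in V_\nu$ directly from the definition of the eigenspaces in (M4). This shows $V_1 \cdot V_\nu \subseteq V_\nu$ for every $\nu$; in particular $V_1 \cdot V_1 \subseteq V_1$, $V_1 \cdot V_0 = 0$ (since $\nu = 0$ forces $a \cdot v = 0$), and $V_1 \cdot V_{1/4}$, $V_1 \cdot V_{1/32}$ land in the respective eigenspaces.

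Next I would exploit $\tau(a)$. By (M6) it is an algebra automorphism acting as $+1$ on $V_+ := V_1 \oplus V_0 \oplus V_{1/4}$ and as $-1$ on $V_{1/32}$, so the product respects the associated $\mathbb{Z}/2$-grading: a product of two $\tau$-homogeneous elements is $\tau$-fixed when their signs agree and $\tau$-negated otherwise. This settles the presence or absence of a $V_{1/32}$-component in every remaining entry; for instance $V_{1/32} \cdot V_{1/32} \subseteq V_+$ while $V_{1/32} \cdot V_0$ and $V_{1/32} \cdot V_{1/4}$ lie in $V_{1/32}$. Restricting to products that land in $V_+$, I would then apply $\sigma(a)$, which by (M7) is an automorphism of $(V_+, \cdot)$ acting as $+1$ on $V_1 \oplus V_0$ and $-1$ on $V_{1/4}$. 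The same grading argument separates the $V_{1/4}$-component from the $V_1 \oplus V_0$-component, yielding $V_{1/4} \cdot V_{1/4} \subseteq V_1 \oplus V_0$, $V_0 \cdot V_{1/4} \subseteq V_{1/4}$, and $V_0 \cdot V_0 \subseteq V_1 \oplus V_0$.

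At this point every entry matches Table~\ref{tab:fusion} except $V_0 \cdot V_0$, where the grading arguments leave a possible spurious $V_1$-component; eliminating it is the step I expect to be the real content of the proof. Here I would use (M1) together with Lemma~\ref{lem:projection}: for $u, v \in V_0$ the projection of $u \cdot v$ onto $V_1$ is $(a, u \cdot v)a$, and (M1) gives $(a, u \cdot v) = (a \cdot u, v) = 0$ since $a \cdot u = 0$. Hence the $V_1$-component vanishes and $V_0 \cdot V_0 \subseteq V_0$, completing the table. The one subtlety to handle carefully is that $\sigma(a)$ is defined only on $V_+$, so the $\sigma$-argument may be invoked only after the $\tau$-argument has placed the relevant product inside $V_+$.
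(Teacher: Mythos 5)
Your proof is correct and complete: the $\mu=1$ entries follow from (M5), the $\mathbb{Z}/2$-gradings coming from $\tau(a)$ and $\sigma(a)$ via (M6) and (M7) give every remaining entry of Table~\ref{tab:fusion} except the possible $V_1$-component of $V_0^{(a)}\cdot V_0^{(a)}$, and you correctly kill that component with (M1) and Lemma~\ref{lem:projection}. The paper itself gives no proof of this lemma (it is quoted from the literature), but your derivation is exactly the standard one, and you rightly flag the only subtle point, namely that the $\sigma(a)$-argument is legitimate only after the $\tau(a)$-argument has placed the product inside $V_+^{(a)}$.
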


\begin{table}
\centering
\begin{tabular}{>{$}c<{$}|>{$}c<{$}>{$}c<{$}>{$}c<{$}>{$}c<{$}}
  & 1 & 0 & \frac{1}{2^2} & \frac{1}{2^5} \\ \hline
1 & 1 & 0 & \frac{1}{2^2} & \frac{1}{2^5} \\
0 & 0 & 0 & \frac{1}{2^2} & \frac{1}{2^5} \\
\frac{1}{2^2} & \frac{1}{2^2} & \frac{1}{2^2} & 1,0 & \frac{1}{2^5} \\
\frac{1}{2^5} & \frac{1}{2^5} & \frac{1}{2^5} & \frac{1}{2^5} & 1,0,\frac{1}{2^2} 
\end{tabular}
\caption{The Fusion Rules}
\label{tab:fusion}
\end{table}

The seminal paper in Majorana theory was that of Ivanov et. al. \cite{IPSS10}. In the first part of the paper, they reproved Sakuma's theorem (Theorem \ref{thm:IPSS10}) as stated below. This result truly forms the foundation of Majorana theory. It can equivalently be thought of as the classification of $2$-generated Majorana algebras, i.e. those generated by at most two Majorana axes.

\begin{thm}[{{\cite{IPSS10}}}] 
\label{thm:IPSS10}
Let $\textbf{R} = (G,T,V,\cdot, (,), \varphi, \psi)$ be a Majorana representation of $G$, as defined above. For $t_0, t_1 \in T$ let $a_0 = \psi(t_0)$, $a_1 = \psi(t_1)$, $\tau_0 = \varphi(t_0)$, $\tau_1 = \varphi(t_1)$ and $\rho = t_0t_1$. Finally, let $D \leq GL(V)$ be the dihedral group $\langle \tau_0, \tau_1 \rangle$. Then 
\begin{enumerate}[(i)]
\item $|D| = 2N$ for $1 \leq N \leq 6$;
\item the subalgebra $U = \langle \langle a_0, a_1 \rangle \rangle$ is isomorphic to a dihedral algebra of type $NX$ for $X \in \{A,B,C\}$, the structure of which is given in Table \ref{tab:sakuma};
\item for $i \in \mathbb{Z}$ and $\epsilon \in \{0,1\}$, the image of $a_{\epsilon}$ under the $i$-th power of $\rho$, which we denote $a_{2i+\epsilon}$, is a Majorana axis and $\tau(a_{2i + \epsilon}) = \rho^{-i}\tau_{\epsilon}\rho^i$. 
\end{enumerate}
\end{thm}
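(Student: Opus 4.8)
The plan is to dispatch part (iii) first, since it is essentially formal, and then assemble the dihedral structure needed for (i) and (ii), leaving the quantitative bound $N \leq 6$ and the explicit classification as the substantive core. \textbf{Part (iii).} Because $T$ is $G$-invariant, $t_\epsilon^{\rho^i} \in T$ for all $i \in \mathbb{Z}$ and $\epsilon \in \{0,1\}$, so the equivariance $\psi(t^g) = \psi(t)^{\varphi(g)}$ gives
\[
a_{2i+\epsilon} = a_\epsilon^{\varphi(\rho^i)} = \psi(t_\epsilon)^{\varphi(\rho^i)} = \psi\big(t_\epsilon^{\rho^i}\big) \in A,
\]
so each $a_{2i+\epsilon}$ is a Majorana axis. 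For the involution identity I would use that every $\varphi(g)$ is an algebra automorphism and hence maps $V_\mu^{(a)}$ onto $V_\mu^{(a^{\varphi(g)})}$; since $\tau(\cdot)$ is defined in (M6) purely through this eigenspace grading, this yields $\tau\big(a^{\varphi(g)}\big) = \varphi(g)^{-1}\,\tau(a)\,\varphi(g)$. Taking $a = a_\epsilon$ and $g = \rho^i$, and using the compatibility $\tau(a_\epsilon) = \tau_\epsilon$ built into the representation, gives $\tau(a_{2i+\epsilon}) = \rho^{-i}\tau_\epsilon\rho^i$.

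\textbf{Setup for (i) and (ii).} Axiom (M6) forces $\tau(a)^2 = 1$, so $\tau_0,\tau_1$ are involutions and $D = \langle \tau_0,\tau_1\rangle$ is dihedral of order $2N$ for some $N \in \{1,2,\dots\} \cup \{\infty\}$; write $\rho = \tau_0\tau_1$. Part (iii) shows the axes $\{a_n\}$ form a single $\langle\rho\rangle$-orbit, and $D$-invariance of the inner product makes $(a_m,a_n)$ depend only on $m-n$ modulo the order of $\rho$. The whole theorem then reduces to controlling the subalgebra $U = \langle\langle a_0,a_1\rangle\rangle$ and showing this order is finite and at most $6$.

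\textbf{The core bound.} Here I would expand $a_1$ into its $\mathrm{ad}_{a_0}$-eigencomponents, reading off the $V_1^{(a_0)}$-part as $(a_0,a_1)a_0$ by Lemma~\ref{lem:projection} and extracting the $V_{\frac{1}{2^5}}^{(a_0)}$-part from $a_1 - a_1^{\tau_0}$ via Lemma~\ref{lem:32evecs}. Feeding such decompositions into the fusion rules (Lemma~\ref{lem:fusion}) shows that iterated products of the $a_n$ close up into a low-dimensional span, and, together with the orthogonality of eigenspaces (Lemma~\ref{lem:orthogonality}) and (M1), that the numbers $(a_0,a_n)$ obey a linear recurrence whose characteristic roots are fixed by the eigenvalues $\{1,0,\frac{1}{2^2},\frac{1}{2^5}\}$. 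The Gram matrix of $a_0,\dots,a_{2N-1}$ is then essentially circulant in a few parameters; since $(\,,\,)$ is positive (semi-)definite this matrix must have non-negative spectrum, and the Norton inequality (M2) supplies the remaining constraint. Carrying out this positivity analysis is what forces the period to be finite and $N \leq 6$.

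\textbf{Classification.} For each fixed $N \in \{1,\dots,6\}$ I would then solve the resulting finite system: the fusion rules and (M1) pin down the products $a_m\cdot a_n$ and the inner products up to finitely many scalars, and positivity together with the Norton inequality discards the inconsistent parameter values, leaving the admissible algebras $NX$ with $X \in \{A,B,C\}$ whose multiplication tables are recorded in Table~\ref{tab:sakuma}. The main obstacle is the quantitative step: establishing closure of products under the fusion rules in a uniform way, and extracting from positivity and (M2) both the sharp bound $N \leq 6$ and the case-by-case confirmation that each surviving parameter set defines a genuine Majorana algebra.
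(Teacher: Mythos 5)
First, note that the paper does not prove Theorem \ref{thm:IPSS10}: it is Sakuma's theorem, imported wholesale from \cite{IPSS10} (see also \cite{Sakuma07}), so there is no internal proof to compare your attempt against. Judged on its own terms, your part (iii) is essentially sound: $G$-invariance of $T$ plus the equivariance of $\psi$ shows each $a_{2i+\epsilon}$ lies in $A$, and the conjugation formula $\tau(a^{\varphi(g)}) = \varphi(g)^{-1}\tau(a)\varphi(g)$ does follow from the fact that a product-preserving automorphism carries $V_{\mu}^{(a)}$ onto $V_{\mu}^{(a^{\varphi(g)})}$. The one point to flag is your appeal to ``the compatibility $\tau(a_\epsilon)=\tau_\epsilon$ built into the representation'': Definition \ref{defn:majoranarep} as written in this paper does not actually include the usual axiom $\varphi(t)=\tau(\psi(t))$ for $t\in T$, so you are invoking a hypothesis the stated definition does not supply.

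For parts (i) and (ii) there is a genuine gap: what you have written is a plan, and the plan defers precisely the content of the theorem. Three concrete points. First, you assert that ``iterated products of the $a_n$ close up into a low-dimensional span''; the fusion rules (Lemma \ref{lem:fusion}) alone do not give this, and establishing that $U$ is spanned by finitely many $a_n$ together with at most one auxiliary vector is itself a substantial computation (this is where resurrection-type identities enter in \cite{IPSS10}). Second, the claim that the $(a_0,a_n)$ obey a linear recurrence ``whose characteristic roots are fixed by the eigenvalues $\{1,0,\frac{1}{2^2},\frac{1}{2^5}\}$'' is unsupported and is not, as stated, how the bound $N\leq 6$ is obtained; the known arguments pin down the possible values of $(a_0,a_1)$ through explicit eigenvector computations and then eliminate cases using positivity of the Gram matrix and (M2). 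Third, the classification recorded in Table \ref{tab:sakuma} requires, for each $N\leq 6$, solving for all products and verifying that each surviving parameter set genuinely satisfies (M1)--(M7); you correctly identify this as ``the main obstacle'' but do not carry it out. Since the entire quantitative force of the theorem lives in these steps, the proposal is an accurate high-level description of the architecture of the proof in \cite{IPSS10}, but it is not a proof.
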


\begin{table}
\begin{center}
\vspace{0.35cm}
\noindent
\begin{tabular}{|c|c|c|}
\hline
&&\\
 Type & Basis & Products and angles \\
&&\\
\hline
&&\\
&& $a_0 \cdot a_1=\frac{1}{2^3}(a_0+a_1-a_{\rho(t_0,t_1)}),~a_0 \cdot a_{\rho(t_0,t_1)}=\frac{1}{2^3}(a_0+a_{\rho(t_0,t_1)}-a_1)$ \\
2A & $a_0,a_1,a_{\rho(t_0,t_1)}$ & $a_{\rho(t_0,t_1)} \cdot a_{\rho(t_0,t_1)} = a_{\rho(t_0,t_1)}$ \\ 
&&$(a_0,a_1)=(a_0,a_{\rho(t_0,t_1)})=(a_{\rho(t_0,t_1)},a_{\rho(t_0,t_1)})=\frac{1}{2^3}$\\
&& \\ 
2B & $a_0,a_1$ &$a_0 \cdot a_1=0$,~$(a_0,a_1)=0$ \\
&&\\
&  &$a_0 \cdot a_1=\frac{1}{2^5}(2a_0+2a_1+a_{-1})-\frac{3^3 \cdot 5}{2^{11}}u_{\rho(t_0,t_1)}$\\
3A& $a_{-1},a_0,a_1,$ & $a_0 \cdot u_{\rho(t_0,t_1)}=\frac{1}{3^2}(2a_0-a_1-a_{-1})+\frac{5}{2^5}u_{\rho(t_0,t_1)}$~~~~\\
&$u_{\rho(t_0,t_1)}$& $u_{\rho(t_0,t_1)} \cdot u_{\rho(t_0,t_1)}=u_{\rho(t_0,t_1)}$\\
&& $(a_0,a_1)=\frac{13}{2^8}$,~$(a_0,u_{\rho(t_0,t_1)})=\frac{1}{2^2}$,~$(u_{\rho(t_0,t_1)},u_{\rho(t_0,t_1)})=\frac{2^3}{5}$
\\
&&\\
3C & $a_{-1},a_0,a_1$ & $a_0 \cdot a_1=\frac{1}{2^6}(a_0+a_1-a_{-1}),~(a_0,a_1)=\frac{1}{2^6}$ \\
&&\\ 
&  & ~$a_0 \cdot a_1=\frac{1}{2^6}(3a_0+3a_1+a_2+a_{-1}-3v_{\rho(t_0,t_1)})$\\
4A & $a_{-1},a_0,a_1,$ & $a_0 \cdot v_{\rho(t_0,t_1)}=\frac{1}{2^4}(5a_0-2a_1-a_2-2a_{-1}+3v_{\rho(t_0,t_1)})$\\
&$a_2,v_{\rho(t_0,t_1)}$&~$v_{\rho(t_0,t_1)} \cdot v_{\rho(t_0,t_1)}=v_{\rho(t_0,t_1)}$, ~$a_0 \cdot a_2=0$ \\
& & $(a_0,a_1)=\frac{1}{2^5},~(a_0,a_2)=0,~(a_0,v_{\rho(t_0,t_1)})=\frac{3}{2^3},~(v_{\rho(t_0,t_1)},v_{\rho(t_0,t_2)})=2$\\
&&\\
4B & $a_{-1},a_0,a_1,$ & $a_0 \cdot a_1=\frac{1}{2^6}(a_0+a_1-a_{-1}-a_2+a_{\rho(t_0,t_2)})$
\\
& $a_2,a_{\rho(t_0,t_2)}$ & $a_0 \cdot a_2=\frac{1}{2^3}(a_0+a_2-a_{\rho(t_0,t_2)})$\\
&& $(a_0,a_1)=\frac{1}{2^6},~(a_0,a_2)=(a_0,a_{\rho(t_0,t_1)})=\frac{1}{2^3}$ \\
&&\\
&& $a_0 \cdot a_1=\frac{1}{2^7}(3a_0+3a_1-a_2-a_{-1}-a_{-2})+w_{\rho(t_0,t_1)}$
\\
 5A & $a_{-2},a_{-1},a_0,$ & $a_0 \cdot a_2=\frac{1}{2^7}(3a_0+3a_2-a_1-a_{-1}-a_{-2})-w_{\rho(t_0,t_1)}$
\\
& $a_1,a_2,w_{\rho(t_0,t_1)}$ & $a_0 \cdot w_{\rho(t_0,t_1)}=\frac{7}{2^{12}}(a_{1}+a_{-1}-a_2-a_{-2})+\frac{7}{2^5}w_{\rho(t_0,t_1)}$\\
& & $w_{\rho(t_0,t_1)} \cdot w_{\rho(t_0,t_1)}=\frac{5^2 \cdot 7}{2^{19}}(a_{-2}+a_{-1}+a_0+a_1+a_2)$\\
&&$(a_0,a_1)=\frac{3}{2^7},~(a_0,w_{\rho(t_0,t_1)})=0$, $(w_{\rho(t_0,t_1)},w_{\rho(t_0,t_1)})=\frac{5^3 \cdot 7}{2^{19}}$\\
&& \\
& & $a_0 \cdot a_1=\frac{1}{2^6}(a_0+a_1-a_{-2}-a_{-1}-a_2-a_3+a_{\rho(t_0,t_3)})+\frac{3^2 \cdot 5}{2^{11}}u_{\rho(t_0,t_2)}$\\
6A& $a_{-2},a_{-1},a_0,$ &$a_0 \cdot a_2=\frac{1}{2^5}(2a_0+2a_2+a_{-2})-\frac{3^3 \cdot 5}{2^{11}}u_{\rho(t_0,t_2)}$  \\ 
&$a_1,a_2,a_3$  &$a_0 \cdot u_{\rho(t_0,t_2)}=\frac{1}{3^2}(2a_0-a_2-a_{-2})+\frac{5}{2^5}u_{\rho(t_0,t_2)}$  \\
&$a_{\rho(t_0,t_3)},u_{\rho(t_0,t_2)}$ & $a_0 \cdot a_3=\frac{1}{2^3}(a_0+a_3-a_{\rho(t_0,t_3)})$, $a_{\rho(t_0,t_3)} \cdot u_{\rho(t_0,t_2)}=0$\\
&&$(a_{\rho(t_0,t_3)},u_{\rho(t_0,t_2)})=0$, $(a_0,a_1)=\frac{5}{2^8}$, $(a_0,a_2)=\frac{13}{2^8}$, $(a_0,a_3)=\frac{1}{2^3}$\\
&&\\
\hline
\end{tabular}
\caption{The dihedral Majorana algebras}
\label{tab:sakuma}  
\end{center}
\end{table}

We can use the values in Table \ref{tab:sakuma} to calculate the eigenvectors of these algebras with respect to the axis $a_0$. 

\begin{prop}
The eigenspace decompositions with respect to the axis $a_0$ for each dihedral Majorana algebra is given in Table \ref{tab:evecs}. In each case, the $1$-eigenspace is the $1$-dimensional space spanned by $a_0$ and so is omitted from this table. 
\end{prop}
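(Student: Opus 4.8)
The plan is to handle each of the eight dihedral types separately, reading the products $a_0\cdot x$ for each basis vector $x$ directly off Table~\ref{tab:sakuma} and determining the eigenvectors of the adjoint map $\mathrm{ad}_{a_0}\colon v\mapsto a_0\cdot v$. Axiom (M4) guarantees that $\mathrm{ad}_{a_0}$ is diagonalisable with spectrum contained in $\{1,0,\tfrac{1}{2^2},\tfrac{1}{2^5}\}$, so the task reduces to exhibiting, in each case, eigenvectors spanning each eigenspace and then checking that their number accounts for the full dimension of the algebra as listed in the basis column of Table~\ref{tab:sakuma}.

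Three structural facts locate most of the eigenvectors before any computation. First, by (M5) the $1$-eigenspace is exactly $\langle a_0\rangle$, which is why it is suppressed in Table~\ref{tab:evecs}. Second, $\tau_0=\varphi(t_0)=\tau(a_0)$ acts as $+1$ on $V_+^{(a_0)}=V_1^{(a_0)}\oplus V_0^{(a_0)}\oplus V_{\frac{1}{2^2}}^{(a_0)}$ and as $-1$ on $V_{\frac{1}{2^5}}^{(a_0)}$ by (M6); equivalently, Lemma~\ref{lem:32evecs} says that every vector $v-v^{\tau_0}$ lies in $V_{\frac{1}{2^5}}^{(a_0)}$. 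The $G$-equivariance of $\psi$ gives $a_i^{\tau_0}=a_{-i}$, while $\tau_0$ normalises $\langle\rho\rangle$ and so fixes each of the extra generators $u_\rho,v_\rho,w_\rho$ (which may also be checked directly from Table~\ref{tab:sakuma}). Hence $V_{\frac{1}{2^5}}^{(a_0)}$ is spanned by the antisymmetric vectors $a_i-a_{-i}$. Third, the projection lemma (Lemma~\ref{lem:projection}) lets me subtract $(a_0,v)\,a_0$ from any candidate, ensuring that the eigenvectors recorded for $V_0^{(a_0)}$ and $V_{\frac{1}{2^2}}^{(a_0)}$ are orthogonal to $a_0$.

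With the $1$- and $\tfrac{1}{2^5}$-eigenspaces fixed, only the splitting of the $\tau_0$-fixed part into $V_0^{(a_0)}$ and $V_{\frac{1}{2^2}}^{(a_0)}$ remains. I would restrict $\mathrm{ad}_{a_0}$ to the span of the symmetric sums $a_i+a_{-i}$ together with the extra generators, write out the matrix of this restriction from Table~\ref{tab:sakuma}, diagonalise it, and then normalise each eigenvector against $a_0$ using Lemma~\ref{lem:projection}. For the smaller types this is immediate; as a sanity check, in 2A and 2B, where $\rho$ is an involution, $\tau_0$ fixes every axis and $V_{\frac{1}{2^5}}^{(a_0)}=0$, while in 3C the only remaining fixed direction beyond $\langle a_0\rangle$ is one-dimensional and the split is trivial.

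The main obstacle is purely computational and is concentrated in the larger types, above all 5A and 6A. There the $\tau_0$-fixed space is several dimensions and genuinely mixes the symmetric sums with the generators $w_\rho$, $u_\rho$ and $v_\rho$, whose products with $a_0$ carry the awkward coefficients appearing in Table~\ref{tab:sakuma} (for example $\tfrac{7}{2^{12}}$ and $\tfrac{5^2\cdot 7}{2^{19}}$ in 5A); in particular these generators are not themselves eigenvectors, so the $0$- and $\tfrac{1}{2^2}$-eigenvectors emerge only after diagonalising a genuine matrix. Carrying out these diagonalisations, presenting the eigenvectors in clean form, and confirming in each case that $1+\dim V_0^{(a_0)}+\dim V_{\frac{1}{2^2}}^{(a_0)}+\dim V_{\frac{1}{2^5}}^{(a_0)}$ equals the dimension of the algebra is where the care lies; the structural reductions above guarantee that no new idea is required beyond this bookkeeping.
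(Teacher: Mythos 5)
Your proposal is correct and matches the paper's (implicit) argument: the paper offers no proof beyond the remark that the eigenvectors are calculated directly from the product values in Table \ref{tab:sakuma}, which is exactly the computation you describe. Your structural reductions --- (M5) for the $1$-eigenspace, Lemma \ref{lem:32evecs} together with $a_i^{\tau_0}=a_{-i}$ to identify $V_{\frac{1}{2^5}}^{(a_0)}$ with the span of the $a_i-a_{-i}$, and Lemma \ref{lem:projection} to normalise against $a_0$ --- are sound and reduce the verification to the finite diagonalisations you outline.
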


\begin{table}
\begin{center}
\vspace{0.35cm}
\noindent
\begin{tabular}{|>{$}c<{$}|>{$}c<{$}|>{$}c<{$}|>{$}c<{$}|}
\hline
&&&\\
 \mbox{Type} & 0 & \frac{1}{2^2} & \frac{1}{2^5}  \\
&&&\\
\hline
&&&\\
2A & a_1 + a_{\rho(t_0,t_1)} - \frac{1}{2^2} & a_1 - a_{\rho(t_0,t_1)} & \\
&&&\\
2B & a_1 & & \\
&&&\\
3A  & u_{\rho(t_0,t_1)} + \frac{2 \cdot 5}{3^3}a_0 + \frac{2^5}{3^3}(a_1 + a_{-1}) 
    & \ml{c}{u_{\rho(t_0,t_1)} - \frac{2^3}{3^2 \cdot 5}a_0\\ 
        - \frac{2^5}{3^2 \cdot 5}(a_1 + a_{-1}) }
    & a_1 - a_{-1} \\
&&&\\
3C  & a_1 + a_{-1} - \frac{1}{2^5}a_0 
    & 
    & a_1 - a_{-1} \\ 
&&&\\
4A  & v_{\rho(t_0,t_1)} - \frac{1}{2}a_0 + 2(a_1 + a_{-1}), a_2 
    & \ml{c}{v_{\rho(t_0,t_1)} - \frac{1}{3}a_0 \\ 
        - \frac{2}{3}(a_1 + a_{-1}) - \frac{1}{3}a_2} 
    & a_1 - a_{-1} \\ 
&&&\\
4B  & \ml{c}{a_1 + a_{-1} - \frac{1}{2^5}a_0 - \frac{1}{2^3}(a_{\rho(t_0,t_2)}- a_2), \\ 
        a_2 + a_{\rho(t_0,t_2)} - \frac{1}{2^2}a_0} 
    & a_2 - a_{\rho(t_0,t_2)} 
    & a_1 - a_{-1} \\ 
&&&\\
5A  & \ml{c}{w_{\rho(t_0,t_1)} + \frac{3}{2^9}a_0 - \frac{3 \cdot 5}{2^7}(a_1 + a_{-1}) - \frac{1}{2^7}(a_2 - a_{-2}), \\ 
        w_{\rho(t_0,t_1)} - \frac{3}{2^9}a_0 + \frac{1}{2^7}(a_1 + a_{-1}) + \frac{3 \cdot 5}{2^7}(a_2 + a_{-2})} 
    & \ml{c}{w_{\rho(t_0,t_1)} + \frac{1}{2^7}(a_1 + a_{-1}) \\
        - \frac{1}{2^7}(a_2 + a_{-2})} 
    & \ml{c}{a_1 - a_{-1}, \\ a_2 - a_{-2}} \\
&&&\\
6A  & \ml{c}{u_{\rho(t_0,t_2)} + \frac{2}{3^2 \cdot 5}a_0 - \frac{2^8}{3^2 \cdot 5}(a_1 - a_{-1}) \\ 
        - \frac{2^5}{3^2 \cdot 5}(a_2 + a_{-2} + a_3 - a_{\rho(t_0,t_3)}), \\ 
        a_3 + a_{\rho(t_0,t_3)} - \frac{1}{2^2}a_0, \\ 
        u_{\rho(t_0,t_2)} - \frac{2 \cdot 5}{3^3}a_0 + \frac{2^5}{3^3}(a_2 + a_{-2})} 
    & \ml{c}{u_{\rho(t_0,t_2)} - \frac{2^3}{3^2 \cdot 5}a_0  \\ 
        - \frac{2^5}{3^2 \cdot 5}(a_2 + a_{-2} + a_3)\\
        + \frac{2^5}{3^2 \cdot 5} a_{\rho(t_0,t_3)} , \\ 
        a_3 - a_{\rho(t_0,t_3)}} 
    & \ml{c}{a_1 - a_{-1}, \\ a_2 - a_{-2}} \\
&&& \\
\hline
\end{tabular}
\caption{The eigenspace decomposition of the dihedral Majorana algebras}
\label{tab:evecs}  
\end{center}
\end{table}

The following result is also a direct consequence of the values in Table \ref{tab:sakuma}. 

\begin{prop}
\label{prop:axes}
Let $V$ be a Majorana algebra and let $a_0, a_1 \in A$. Let $U := \langle \langle a_0, a_1 \rangle \rangle$ and let $t_0 := \tau(a_0)$ and $t_1 := \tau(a_1)$. If $U$ is of type $3A$ or $4A$ and then $U$ contains the additional basis vector $u_{\rho(t_0, t_1)}$ or $v_{\rho(t_0,t_1)}$ respectively where $u_{\rho(t_0,t_1)}$ and $v_{\rho(t_0,t_1)}$ depend only on the cyclic subgroup $\langle t_0t_1 \rangle$. That is to say,
\[
u_{\rho(t_0,t_1)} = u_{\rho(t_0, t_0t_1t_0)} \mbox{ and } v_{\rho(t_0, t_1)} = v_{\rho(t_0, t_0t_1t_0)}.
\]
Similarly, if $U$ is of type $5A$ then $U$ contains an additional basis vector $w_{\rho(t_0,t_1)}$ which, up to a possible change of sign, depends only on the cyclic subgroup $\langle t_0t_1 \rangle$, as below
\begin{equation}
\label{eq:5Aaxes}
w_{\rho(t_0,t_1)} = - w_{\rho(t_0, t_1t_0t_1)} = - w_{\rho(t_0, t_0t_1t_0t_1t_0)} = w_{\rho(t_0, t_0t_1t_0)}.
\end{equation}
\end{prop}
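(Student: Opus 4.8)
The plan is to handle all three types by a single mechanism, exploiting that the dihedral group $D = \langle t_0, t_1 \rangle$ acts on $U = \langle\langle a_0, a_1 \rangle\rangle$ by algebra automorphisms. First I would solve the defining product in Table \ref{tab:sakuma} for the additional vector. For instance, in type $3A$ the relation $a_0 \cdot a_1 = \tfrac{1}{2^5}(2a_0 + 2a_1 + a_{-1}) - \tfrac{3^3 \cdot 5}{2^{11}} u_{\rho(t_0,t_1)}$ rearranges to express $u_{\rho(t_0,t_1)}$ explicitly in terms of the axes $a_{-1}, a_0, a_1$ and the single product $a_0 \cdot a_1$, and likewise for $v$ in $4A$ and $w$ in $5A$. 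This both confirms that the additional vector lies in $U$ and records it as a concrete element on which the automorphisms can be evaluated.

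Next I would identify the ``second'' axis of each pair appearing in the statement. By Theorem \ref{thm:IPSS10}(iii) one has $\tau(a_{-1}) = t_0t_1t_0$, $\tau(a_2) = t_1t_0t_1$, and so on, so that each pair $(t_0, t_0t_1t_0)$, $(t_0, t_1t_0t_1)$, \dots is of the form $(a_0, a_k)$ for an axis $a_k \in U$. Since any two reflections in $D$ generate $D$, the pair $(a_0, a_k)$ generates the same subalgebra $U$ of the same type, and so its additional vector is computed by the \emph{same} formula from Table \ref{tab:sakuma}, now applied to the axes relabelled according to the rotation $\rho(t_0, t_k) = t_0 t_k$. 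Working out these rotations gives $\rho^{-1}$ for types $3A$ and $4A$, and $\rho, \rho^2, \rho^{-2}, \rho^{-1}$ for the four pairs in $5A$, which matches the subscripts in the statement.

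The crux is then to compare the two descriptions of the additional vector of the new pair. Both $\varphi(t_0)$ and $\varphi(\rho)$ preserve $\cdot$ and are therefore automorphisms of $U$, acting on the axes by $a_i \mapsto a_{-i}$ and $a_i \mapsto a_{i+2}$ respectively, subject to the periodicities $a_{i+N} = a_i$ forced by $|D| = 2N$ together with $a_{-2} = a_2$ when $N = 4$. Applying the appropriate automorphism to the explicit expression from the first step identifies the additional vector of the relabelled pair with the image of $u$ (resp. $v$, $w$) under that automorphism, and it remains to check this image equals the original up to the stated sign. For $3A$ and $4A$ this reduces to the single identity $a_0 \cdot (a_1 - a_{-1}) = \tfrac{1}{2^5}(a_1 - a_{-1})$, i.e.\ that $a_1 - a_{-1}$ lies in $V_{\frac{1}{2^5}}^{(a_0)}$, which is precisely the entry recorded in Table \ref{tab:evecs}. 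The same eigenvector relation gives $w_{\rho(t_0,t_1)} = w_{\rho(t_0, t_0t_1t_0)}$ in type $5A$, while comparing the table's product $a_0 \cdot a_2$, which carries $-w_{\rho(t_0,t_1)}$, against the relabelled $5A$ formula for the pair $(a_0, a_2)$, which carries $+w_{\rho(t_0, t_1t_0t_1)}$, forces the sign flip $w_{\rho(t_0, t_1t_0t_1)} = -w_{\rho(t_0,t_1)}$; the remaining equality then follows by composing these two.

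I expect the main obstacle to be purely bookkeeping: keeping the cyclic index arithmetic consistent through each relabelling, remembering the coincidences $a_{i+N} = a_i$ and $a_{-2} = a_2$, and carrying the sign correctly through the $5A$ computation. Once the eigenvector entries of Table \ref{tab:evecs} are in hand the underlying algebra is routine, so the genuine care lies in the index and sign tracking rather than in any hard estimate.
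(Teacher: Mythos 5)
Your argument is correct and is essentially the paper's own justification spelled out: the paper simply asserts the proposition as a ``direct consequence of the values in Table \ref{tab:sakuma}'', and your relabelling-plus-automorphism computation --- solving for the extra vector, applying $\tau(a_0)$ and powers of $\rho$, and reducing everything to $a_1 - a_{-1} \in V_{\frac{1}{2^5}}^{(a_0)}$ together with the sign carried by $a_0 \cdot a_2$ in type $5A$ --- is the natural way to carry that out. The only slip is the blanket claim that any two reflections in $D$ generate $D$ (false in $D_8$ when their product is the central rotation), but every pair you actually use has product generating $\langle t_0t_1\rangle$, so the argument is unaffected.
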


Sakuma's theorem provides a crucial tool in Majorana theory. If the dihedral subalgebras of a Majorana algebra are known, they provide initial values for the inner and algebra products which can be used to explore the structure of the whole algebra. As such, it is important to classify the possibilities for the dihedral algebras which are contained in a given Majorana algebra. This idea is formalised in the following definition. 

\begin{defn}
If $\textbf{R} = (G,T,V)$ is a Majorana representation then we define a map $\Psi$ which sends $(t,s) \in T^2$ to the type of the dihedral Majorana algebra $\langle \langle a_{t}, a_{s}  \rangle \rangle$. Then the \emph{shape} of $\textbf{R}$ is the tuple $[\Psi((t_{i_1}, t_{j_1})), \Psi((t_{i_2}, t_{j_2}))  \ldots , \Psi((t_{i_n}, t_{j_n}))]$ where the $(t_{i_k}, t_{j_k})$ are representatives of the orbits of $G$ on $T \times T$. 
\end{defn}

It is worth noting that it is not necessarily the case that two non-isomorphic Majorana representations must have different shapes. However, there are no known examples where this is not the case. It is an open question as to whether there exist any non-isomorphic Majorana algebras which share the same shape. 

When determining the shape of a Majorana representation, if $t, s \in T$ and $\langle t,s \rangle \cong D_{2N}$ then $\Psi((t,s)^G) = NX$ for $X \in \{A, B, C \}$. If $N = 1, 5$ or $6$, there is only one option for the value of $X$. If $N$ takes any other value, we must use other results to restrict the possible value of $X$. In particular, the following lemma can be deduced from the structure of the dihedral algebras (see Lemma 2.20, \cite{IPSS10}).
\begin{lem}
\label{lem:inclusions}
Let $U$ be a dihedral algebra (as in Table \ref{tab:sakuma}) that is generated by Majorana axes $a_0$ and $a_1$. Then
\begin{enumerate}[(i)]
\item if $U$ is of type $4A$, $4B$ or $6A$ then the subalgebra generated by $a_0$ and $a_2$ is of type $2B$, $2A$ or $3A$ respectively;
\item if $U$ is of type $6A$ then the subalgebra generated by $a_0$ and $a_3$ is of type $2A$. 
\end{enumerate} 
\end{lem}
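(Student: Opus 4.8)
The plan is to apply Sakuma's theorem (Theorem \ref{thm:IPSS10}) to the pairs $(a_0, a_2)$ and $(a_0, a_3)$: since $a_2$ and $a_3$ are themselves Majorana axes (part (iii) of that theorem), each of $\langle\langle a_0, a_2\rangle\rangle$ and $\langle\langle a_0, a_3\rangle\rangle$ is again a dihedral Majorana algebra, and its type $NX$ can be recovered in two steps. First I would find $N$ group-theoretically, as half the order of the dihedral group generated by the two corresponding Majorana involutions; then I would fix the letter $X$ by reading the relevant product off the $U$-row of Table \ref{tab:sakuma} and matching it against the defining products of the candidate types.

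For the first step, set $\rho = t_0 t_1$ and recall from part (iii) of Theorem \ref{thm:IPSS10} that $\tau(a_{2i+\epsilon}) = \rho^{-i}\tau_\epsilon\rho^i$. Using $t_1 = t_0\rho$ together with the dihedral relation $t_0\rho t_0 = \rho^{-1}$ (equivalently $t_0\rho^{-1}t_0 = \rho$), a short computation gives $\tau(a_0)\tau(a_2) = \rho^2$ and $\tau(a_0)\tau(a_3) = \rho^3$. When $U$ is of type $4A$, $4B$ or $6A$ the rotation $\rho$ has order $4$, $4$ or $6$ respectively, so $\rho^2$ has order $2$, $2$, $3$, while $\rho^3$ has order $2$ in the $6A$ case. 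Hence $\langle \tau(a_0), \tau(a_2)\rangle$ is dihedral of order $4$, $4$, $6$ (i.e. $N = 2, 2, 3$), and $\langle \tau(a_0), \tau(a_3)\rangle$ is dihedral of order $4$ (i.e. $N = 2$), exactly as required for the stated types.

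For the second step, the products $a_0 \cdot a_2$ and $a_0 \cdot a_3$ are already recorded in the $U$-rows of Table \ref{tab:sakuma}, so I would simply compare them with the defining products of the dihedral types of the correct order $N$. In the $4A$ case $a_0 \cdot a_2 = 0$ is the $2B$ product; in the $4B$ case $a_0 \cdot a_2 = \tfrac{1}{2^3}(a_0 + a_2 - a_{\rho(t_0,t_2)})$ is the $2A$ product; in the $6A$ case $a_0 \cdot a_2 = \tfrac{1}{2^5}(2a_0 + 2a_2 + a_{-2}) - \tfrac{3^3\cdot 5}{2^{11}}u_{\rho(t_0,t_2)}$ is the $3A$ product after relabelling $a_{\pm1}\mapsto a_{\pm2}$; and for (ii) $a_0 \cdot a_3 = \tfrac{1}{2^3}(a_0 + a_3 - a_{\rho(t_0,t_3)})$ is again the $2A$ product. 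Each match is unambiguous among types of the given $N$, since $2A$ and $2B$ are distinguished precisely by whether the product vanishes, and $3A$ is the only type with $N = 3$ whose product carries a $u$-vector term, with Proposition \ref{prop:axes} ensuring that this $u$-vector is the genuine additional basis vector of the recognised $3A$ subalgebra.

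I expect the only real bookkeeping to lie in the first step, namely expressing each product of Majorana involutions as the correct power of $\rho$ via the conjugation formula and the dihedral relations. Once $N$ is pinned down the identification of $X$ is a direct reading of table entries, so the main obstacle is simply to keep the indices and conjugations consistent throughout.
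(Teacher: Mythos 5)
Your argument is correct, and it is essentially the intended one: the paper gives no proof of this lemma, simply remarking that it ``can be deduced from the structure of the dihedral algebras'' and citing Lemma 2.20 of \cite{IPSS10}. Your two-step computation --- identifying $\tau(a_0)\tau(a_2)=\rho^2$ and $\tau(a_0)\tau(a_3)=\rho^3$ via Theorem \ref{thm:IPSS10}(iii) to pin down $N$, then matching $a_0\cdot a_2$ and $a_0\cdot a_3$ from the $4A$, $4B$ and $6A$ rows of Table \ref{tab:sakuma} against the defining products of $2B$, $2A$ and $3A$ --- is precisely how that deduction goes, and correctly handles the only delicate points (that $a_2,a_3$ are genuine Majorana axes so Sakuma applies, and that types with equal $N$ are distinguished by the product values).
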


Informally, this means that we have the following \emph{inclusions} of algebras:
\[
2A \hookrightarrow 4B, \qquad 2B \hookrightarrow 4A, \qquad 2A \hookrightarrow 6A, \qquad \textrm{and} \qquad 3A \hookrightarrow 6A.
\]

Given a group $G$ and a set of $G$-invariant involutions $T$, we let $\Gamma$ be the directed graph whose vertex set is the set of orbits of $G$ on $T \times T$ and where $(t_0, t_1)^G \rightarrow (t_2, t_3)^G$ is an edge if and only if $\langle \langle \psi(t_0), \psi(t_1) \rangle \rangle \hookrightarrow \langle \langle \psi(t_2), \psi(t_3) \rangle \rangle$. If we fix the type of the dihedral algebra corresponding to one vertex $v \in V(\Gamma)$ then this determines the types of the algebras corresponding to all vertices in its connected component. In particular, there are at most $2^c$ possible shapes for a representation of the form $(G,T,V)$, where $c$ is the number of connected components of $\Gamma$.

Finally, we define five further axioms. Let $(G,T,V)$ be a Majorana representation and let $t_0, t_1, t_2, t_3 \in T$ with corresponding Majorana axes $a_i := \psi(t_i)$.
\begin{description}
\item[2Aa] If $t_0t_1t_2 = 1$ and $\langle \langle a_0, a_1 \rangle \rangle$ is of type $2A$ then $a_2 \in \langle \langle a_0, a_1 \rangle \rangle$ and $a_2 = a_{\rho(t_0,t_1)}$.
\item[2Ab] If $\langle \langle a_0, a_1 \rangle \rangle$ and $\langle \langle a_2, a_3 \rangle \rangle$ are of type $2A$ and $t_0t_1 = t_2t_3 $ then the basis elements $a_{\rho(a_0,a_1)}$ and $a_{\rho(a_2, a_3)}$ are equal.
\item[3A] If $\langle \langle a_0, a_1 \rangle \rangle$ and $\langle \langle a_2, a_3 \rangle \rangle$ are of type $3A$ and $\langle t_0t_1 \rangle = \langle t_2t_3 \rangle$ then the basis elements $u_{\rho(a_0,a_1)}$ and $u_{\rho(a_2, a_3)}$ are equal.
\item[4A] If $\langle \langle a_0, a_1 \rangle \rangle$ and $\langle \langle a_2, a_3 \rangle \rangle$ are of type $4A$ and $\langle t_0t_1 \rangle = \langle t_2t_3 \rangle$ then the basis elements $v_{\rho(a_0,a_1)}$ and $v_{\rho(a_2, a_3)}$ are equal.
\item[5A] If $\langle \langle a_0, a_1 \rangle \rangle$ and $\langle \langle a_2, a_3 \rangle \rangle$ are of type $4A$ and $\langle t_0t_1 \rangle = \langle t_2t_3 \rangle$ then the basis elements $w_{\rho(a_0,a_1)}$ and $w_{\rho(a_2, a_3)}$ are equal, up to a possible change of sign.
\end{description}

These axioms, as well as the axioms M1 - M7, are known to hold in the Griess algebra. We have implemented two versions of the algorithm, one which assumes axioms 2Aa - 5A and one which does not. However, it is usually much more efficient to assume these axioms as they limit the size of the spanning set \texttt{coordinates} as described in Section \ref{sec:implementation}. In particular, as the main aim of this work is the construction of large subalgebras of the Griess algebra, we assume these axioms throughout the rest of this paper. 

Henceforth, if $U = \langle \langle a_0, a_1 \rangle \rangle$ is a dihedral algebra of type $3A$, $4A$ or $5A$ then we let $t_0 := \tau(a_0)$, $t_1 := \tau(a_1)$ and $h := t_0t_1$ and write $u_h := u_{\rho(t_0, t_1)}$, $v_h := v_{\rho(t_0, t_1)}$ or $w_h := w_{\rho(t_0, t_1)}$ respectively.

\section{Notes on the implementation}
\label{sec:implementation}

The construction of these algebras is expensive both in terms of time and memory and a large part of the implementation of the algorithm involves mitigating these factors. In particular, we exploit the fact that the algebra and inner products on the algebra are preserved by the action of the group on the algebra. That is to say, for all $u, v \in V$ and $g \in G$, $(u^g, v^g) = (u,v)$ and $u^g \cdot v^g = (u \cdot v)^g$. 

In particular, we store product values only for representatives of the orbits of $G$ on $C \times C$ where $C$ is a spanning set of $V$. In order to find the algebra product of two generic vectors, we must recover the required values from the corresponding orbit representative using the action of the group. Similarly, we store eigenvectors only for representatives of the orbits of $G$ on the axes of $V$. 

Recall from Proposition \ref{prop:axes} that we have the following equalities on $3A$, $4A$ and $5A$ axes:
\[
u_h = u_{h^2}, \, v_{h} = v_{h^3}, \mbox{ and } w_h = -w_{h^2} = -w_{h^3} = w_{h^4}. 
\]
In the implementation of the algorithm, we exploit these equalities, as well as those from axioms 2Aa - 5A above, whilst keeping track of any sign changes from the $5A$ axes. 

In a departure from Seress' methods, given a spanning set $C$ of $V$ of size $n$, we express the elements of $G$ as signed permutations on the $n$ vectors of the set $C$. This is more efficient both in terms of time and memory and also makes it easier to implement an $n$-closed version of the algorithm (see Section \ref{sec:nclosed}). In particular, we express an element $g$ of $G$ as
\[
[\pm i_1, \pm i_2, \ldots, \pm i_n]
\]
where $C[j]^g = \pm C[i_j]$. 

For each algebra, we store the following data structures which enable the calculation of the signed permutation corresponding to a given element. 
\begin{itemize}
\item \texttt{coordinates:} This is a set (a sorted, duplicate-free list) which we denote $C$ consisting of all elements of $T$, as well as, or including, exactly one generator of each cyclic group of the form $\langle t_0t_1 \rangle$ for $t_0, t_1 \in T$ such that the dihedral algebra $\langle \langle \psi(t_0), \psi(t_1) \rangle \rangle$ is of type $2A$ $3A$, $4A$ or $5A$. 
\item \texttt{longcoordinates:} This is a set consisting of all elements of $T$ as well as, or including, \emph{all} generators of each cyclic group $\langle t_0t_1 \rangle$ for $t_0, t_1 \in T$ such that the dihedral algebra $\langle \langle \psi(t_0), \psi(t_1) \rangle \rangle$ is of type $2A$ $3A$, $4A$ or $5A$.
\item \texttt{positionlist:} This is a list whose order is equal to the cardinality of \texttt{longcoordinates}. The absolute value of \texttt{positionlist[i]} is the index of the element of \texttt{coordinates} which corresponds to \texttt{longcoordinates[i]}. The entry \texttt{positionlist[i]} is negative if and only if \texttt{longcoordinates[i]} is of order $5$ and is equal to $h^2$ or $h^3$, where $h$ is the corresponding element of \texttt{coordinates}.
\end{itemize}

We note that in the case of the $n$-closed algorithm, we store further elements in the lists \texttt{coordinates} and \texttt{longcoordinates} as described in detail in Section \ref{sec:nclosed}.

We now describe how we use these signed permutations to recover generic product values. We use a bespoke orbits algorithm which partitions the set \texttt{longcoordinates} $\times$ \texttt{longcoordinates} according to the equivalence relation $\sim$ defined as below. 
\begin{defn}
For all $u, v, w, z \in V$, $(u,v) \sim (w,z)$ if and only if at least one of the following conditions holds:
\begin{itemize}
\item $(u,v) = (w^g, z^g)$ for some $g \in G$;
\item $(u,v) = (z,w)$;
\item $\langle u \rangle = \langle w \rangle$ and $\langle v \rangle = \langle z \rangle$.
\end{itemize}
\end{defn}

We label these equivalence classes $P_1, \ldots, P_k$. We do not need to explicitly store these classes. Instead, our bespoke orbits algorithm outputs the following structures.
\begin{itemize}
\item \texttt{pairrepresentatives:} From each equivalence class $P_i$, we pick a representative $p_i$. The entry \texttt{pairrepresentatives[i]} is a list of length two consisting of the positions in \texttt{coordinates} of the elements of $p_i$.
\item \texttt{pairorbitlist:} This is a matrix of size $|C| \times |C|$ whose entries lie in $\{1,2, \ldots, k\}$. If $[\texttt{coordinates[i]}, \texttt{coordinates[j]}] \in P_k$ then \texttt{pairorbitlist[i][j]} is equal to $k$. 
\item \texttt{pairconjelements:} This is a list of length $|G|$ consisting of the signed permutations corresponding to each element of $G$. 
\item \texttt{pairconj:} This is a matrix of size $|C| \times |C|$ whose entries lie in $\{1,2, \ldots, |G|\}$. If [\texttt{coordinates[i]}, \texttt{coordinates[j]}] $\in P_k$ then the value of \texttt{pairconj[i][j]} gives the index in \texttt{pairconjelements} of a signed permutation which sends the representative $p_k$ of $P_k$ to [\texttt{coordinates[i]}, \texttt{coordinates[j]}].
\end{itemize}

In order to recover the product of the two basis vectors corresponding to the elements $C[i]$ and $C[j]$, we execute the following steps.
\begin{enumerate}
\item Let \texttt{k := pairorbit[i][j]} and let \texttt{l := pairconj[i][j]}.
\item Then \texttt{u := algebraproducts[k]} will be a row vector and \texttt{g := pairconjelements[l]} will be a signed permutation representing a group element which sends the representative of the orbit $P_k$ to \texttt{[coordinates[i], coordinates[j]]}. 
\item The desired product will be equal to the row vector \texttt{v} where
\begin{align*}
\texttt{v[i] :=} 
\begin{cases}
\texttt{u[g[i]]} &\mbox{ if } \texttt{g[i]} > 0 \\
\texttt{-u[-g[i]]} &\mbox{ if } \texttt{g[i]} < 0. 
\end{cases}
\end{align*} 
\end{enumerate}

Finally, we store eigenvectors only for representatives of the orbits of $G$ on $T$. Again, instead of storing the full orbits, we use a bespoke orbits algorithm which outputs the following two structures.
\begin{itemize}
\item \texttt{orbitrepresentatives:} This is a list whose entries give the indices in \texttt{coordinates} of representatives of orbits of $G$ on $T$.
\item \texttt{conjelements:} This is a duplicate free list of signed permutations corresponding to the group elements which send an element of \texttt{coordinates} to one of the representatives in \texttt{orbitrepresentatives}.
\end{itemize}

Finally, we note that the methods used in this algorithm require the solving of potentially large systems of linear equations over the rational numbers. Storing the matrices involved and reducing them to row echelon form takes a large amount of the memory and time required by the program. In what we believe to be an improvement on Seress' methods, we use the sparse matrix format provided by the GAP package \texttt{Gauss} \cite{GaussPkg17} as an efficient way to store and compute with the matrices in question.

\section{The algorithm}
\label{sec:mainloop}

\paragraph{Input:} A finite group $G$ and a $G$-invariant set $T$ of involutions such that $G = \langle T \rangle$. The user must also choose which of the possible shapes found by the function \texttt{ShapesOfMajoranaRepresentation} (see Step 1) is to be considered by the algorithm. 

\paragraph{Output:} The algorithm returns a record with the following components.
\begin{itemize}
\item \texttt{group} and \texttt{involutions:} The group $G$ and generating set of involutions $T$, as inputted by the user.
\item \texttt{shape:} The shape of the representation, as chosen by the user.
\item \texttt{setup:} A record whose components are the 9 structures given in Section \ref{sec:implementation}.
\item \texttt{algebraproducts:} A list of row vectors (in sparse matrix format) where \texttt{algebraproducts[i]} gives the algebra product of the two basis vectors whose indices are given by \texttt{setup.pairreps[i]}.
\item \texttt{innerproducts:} A list where \texttt{innerproducts[i]} gives the inner product of the two basis vectors whose indices are given by \texttt{setup.pairreps[i]}.
\item \texttt{evecs:} If $i$ is in \texttt{setup.orbitrepresentatives} then for $j = 1,2,3$, \texttt{evecs[i][j]} gives respectively a basis (in sparse matrix format) of the $0$-, $\frac{1}{2^2}$- or $\frac{1}{2^5}$-eigenspace of the $i$th axis.
\item \texttt{nullspace:} A matrix (in sparse matrix format) which forms a basis of the nullspace of the algebra. 
\end{itemize}

\subsection*{Step 1 - Shapes}

The first step is to find all possible shapes of a Majorana representation of the form $(G,T,V)$. That is to say, we find representatives of the orbits of $G$ on $T \times T$ and determine the possibilities for the types of the dihedral algebras generated by the Majorana axes corresponding to each of these representatives. Note that the possible shapes must respect the inclusions of dihedral algebras, as described in Lemma \ref{lem:inclusions}. 

The function \texttt{ShapesOfMajoranaRepresentation} takes as its input a group $G$ and a set of involutions $T$ and returns a record, of which one of the components is labelled \texttt{shapes} and gives a list of possible shapes for a representation of the form $(G,T,V)$. The user may then choose which of these possible shapes they want to use for the constructive part of the algorithm.

This output is then used as the first input variable in the function \texttt{MajoranaRepresentation}. The second variable is an integer $i$ which signifies that the user has chosen the shape at the position $i$. 

\subsection*{Step 2 - Set Up} 

The first step is to build the nine objects which form the record \texttt{setup}, as described in Section \ref{sec:implementation}. We then record all product values and eigenvectors which are given from the known values on dihedral algebras, i.e. those in Tables \ref{tab:sakuma} and \ref{tab:evecs}.

\subsection*{Step 3 - Inner Products}

The first step in the main part of the algorithm is to find inner product values on the spanning set $C$ of $V$. Let $u$, $v$ and $w$ be elements of $C$. Then, from axiom M1
\[
(u, v \cdot w) = (u \cdot v, w).
\]
We consider all cases where the algebra products $v \cdot w$ and $u \cdot v$ are known, but at least one of the inner products above are not. Using equations of this form, we build systems of linear equations of the unknown products $(u, v)$ where $u$ and $v$ are elements of $C$. We then solve this system and record any new inner products. 

If at this stage, all inner product values are known then we construct the Gram matrix of the inner product and store basis vectors of its nullspace as the component \texttt{nullspace}. In particular, as the inner product is positive definite, these vectors will form a basis of the nullspace of the algebra. 

We note that Seress instead uses the orthogonality of eigenvectors (Lemma \ref{lem:orthogonality}) to determine new inner products. We have tested both methods and have found that our approach tends to find more products and is more efficient. However, in either case, finding inner products tends to take only a small amount of the total running time. 

\subsection*{Step 4 - Fusion of Eigenvectors}

For each of the orbit representatives of $G$ on $T$ given by \texttt{setup.orbitrepresentatives} we take the corresponding Majorana axis $a$ and consider all pairs of known eigenvectors $\alpha \in V_{\mu}^{(a)}$ and $\beta \in V_{\nu}^{(a)}$ for $\mu, \nu \in \{0, \frac{1}{2^2}, \frac{1}{2^5}\}$. If the product $\alpha \cdot \beta$ is known then we can use this, along with the fusion rules, to find further eigenvectors. 

If $\mu \neq \nu$, or if $\mu = \nu = 0$ then $\alpha \cdot \beta$ is itself an eigenvector, and is added to the relevant eigenspace. If $\mu = \nu = \frac{1}{2^2}$ then, using the fusion rules and Lemma \ref{lem:projection}. 
\[
\alpha \cdot \beta = v_0 + (a, \alpha \cdot \beta) a = v_0 + \frac{1}{2^2}(\alpha, \beta)a
\]
where $v_0 \in V_{0}^{(a)}$. Thus, if the value of $(\alpha, \beta)$ is known, then we can recover the eigenvector $v_0$ and add it to the $0$-eigenspace of $a$. 

Similarly, if $\mu = \nu = \frac{1}{2^5}$ then 
\[
\alpha \cdot \beta = v_0 + v_{\frac{1}{2^2}} + \frac{1}{2^5}(\alpha, \beta)a
\]
where $v_0 \in V_{0}^{(a)}$ and $v_{\frac{1}{2^2}} \in V_{\frac{1}{2^2}}^{(a)}$ If the value of $(\alpha, \beta)$ is known, then we further calculate
\[
a \cdot (\alpha \cdot \beta) =  \frac{1}{2^2}v_{\frac{1}{2^2}} + \frac{1}{2^5}(\alpha, \beta)a
\]
and so we can recover both $v_0$ and $v_{\frac{1}{2^2}}$ and add them to their respective eigenspaces.

\subsection*{Step 5 - Algebra Products }

We seek products of the form $u \cdot v$ for $u,v \in C$. We write a system of linear equations of the unknown products from the following sources:
\begin{itemize}
	\item if $v \in V_{\mu}^{(a)}$ for some $a \in A$ and $\mu \in \{0,\frac{1}{2^2},\frac{1}{2^5}\}$ then $a \cdot v = \mu v$; 
	\item if $u \in C$ and $v \in N(V)$ then $u \cdot v = 0$. 
\end{itemize}

We then solve this system and record any new algebra products. If there remains unknown algebra products then we again construct a system of linear equations, this time making use of the resurrection principle.

\begin{prop}[The Generalised Resurrection Principle]
\label{prop:genresurrection}
Fix $a \in A$ and let $\alpha, \gamma \in V_{0}^{(a)}$ and $\beta \in V_{\mu}^{(a)}$ for $\mu \in \{\frac{1}{2^2},\frac{1}{2^5} \}$. Then 
\begin{equation*}
\label{eq:genresurrection}
a \cdot ((\alpha - \beta) \cdot \gamma) = - \mu (\gamma \cdot \beta).
\end{equation*}
Similarly, if $\alpha, \gamma \in V_{\frac{1}{2^2}}^{(a)}$ and $\beta \in V_{\mu}^{(a)}$ for $\mu \in \{0,\frac{1}{2^5} \}$. Then 
\begin{equation*}
a \cdot ((\alpha - \beta) \cdot \gamma) = \frac{1}{2^2}(\alpha, \gamma) a - \nu (\gamma \cdot \beta)
\end{equation*}
where $\nu = \frac{1}{2^2}$ if $\mu = 0$ and $\nu = \frac{1}{2^5}$ if $\mu = \frac{1}{2^5}$.
\end{prop}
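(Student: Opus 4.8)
The plan is to prove both identities by the same strategy: expand the left-hand side using bilinearity of the product, then read off from the fusion rules (Lemma~\ref{lem:fusion}, Table~\ref{tab:fusion}) which eigenspaces the two resulting products $\alpha \cdot \gamma$ and $\beta \cdot \gamma$ lie in, and finally apply the defining eigenvalue equation $a \cdot v = \mu v$ on each homogeneous component. The commutativity of $\cdot$ then lets me freely write $\beta \cdot \gamma = \gamma \cdot \beta$ to match the stated right-hand sides.

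For the first identity I would start from
\[
a \cdot ((\alpha - \beta) \cdot \gamma) = a \cdot (\alpha \cdot \gamma) - a \cdot (\beta \cdot \gamma).
\]
Since $\alpha, \gamma \in V_0^{(a)}$, the $(0,0)$ entry of the fusion table shows $\alpha \cdot \gamma \in V_0^{(a)}$, so $a \cdot (\alpha \cdot \gamma) = 0$ and the first term vanishes. For the second term, $\beta \in V_\mu^{(a)}$ and $\gamma \in V_0^{(a)}$ with $\mu \in \{\tfrac{1}{2^2}, \tfrac{1}{2^5}\}$; the $(0,\mu)$ entry of the table is simply $\mu$, so $\beta \cdot \gamma \in V_\mu^{(a)}$ and hence $a \cdot (\beta \cdot \gamma) = \mu (\beta \cdot \gamma)$. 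Substituting gives $a \cdot ((\alpha - \beta) \cdot \gamma) = -\mu(\gamma \cdot \beta)$, as required.

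The second identity follows the same pattern but with one extra computation, which I expect to be the only real subtlety. Now $\alpha, \gamma \in V_{\frac{1}{2^2}}^{(a)}$, and the $(\tfrac{1}{2^2}, \tfrac{1}{2^2})$ entry of the fusion table is $1,0$, so the product lands in $V_1^{(a)} \oplus V_0^{(a)}$ rather than in a single eigenspace. Writing $\alpha \cdot \gamma = v_1 + v_0$ with $v_1 \in V_1^{(a)}$ and $v_0 \in V_0^{(a)}$, I would compute the $V_1^{(a)}$-component using Lemma~\ref{lem:projection}: the projection of $\alpha \cdot \gamma$ onto $V_1^{(a)}$ is $(a, \alpha \cdot \gamma)a$, and by axiom (M1) together with $a \cdot \alpha = \tfrac{1}{2^2}\alpha$ we have $(a, \alpha \cdot \gamma) = (a \cdot \alpha, \gamma) = \tfrac{1}{2^2}(\alpha, \gamma)$. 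Hence $a \cdot (\alpha \cdot \gamma) = a \cdot v_1 + a \cdot v_0 = v_1 = \tfrac{1}{2^2}(\alpha, \gamma) a$, since $a$ acts as the identity on $V_1^{(a)}$ and annihilates $V_0^{(a)}$.

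For the remaining term, with $\beta \in V_\mu^{(a)}$ and $\gamma \in V_{\frac{1}{2^2}}^{(a)}$, the fusion table gives the $(\tfrac{1}{2^2}, 0)$ entry as $\tfrac{1}{2^2}$ and the $(\tfrac{1}{2^2}, \tfrac{1}{2^5})$ entry as $\tfrac{1}{2^5}$, so $\beta \cdot \gamma$ is a $\nu$-eigenvector with $\nu = \tfrac{1}{2^2}$ when $\mu = 0$ and $\nu = \tfrac{1}{2^5}$ when $\mu = \tfrac{1}{2^5}$, exactly matching the definition of $\nu$ in the statement. Thus $a \cdot (\beta \cdot \gamma) = \nu(\gamma \cdot \beta)$, and combining the two terms yields $a \cdot ((\alpha - \beta) \cdot \gamma) = \tfrac{1}{2^2}(\alpha, \gamma)a - \nu(\gamma \cdot \beta)$. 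The main obstacle is nothing more than correctly handling the mixed $V_1^{(a)} \oplus V_0^{(a)}$ fusion in the $(\tfrac{1}{2^2}, \tfrac{1}{2^2})$ case and extracting its $V_1^{(a)}$-part via the projection lemma; everything else is a direct application of the fusion rules and the eigenvalue equation.
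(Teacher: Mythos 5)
Your proof is correct and follows exactly the paper's approach: expand by bilinearity and then apply the fusion rules, which is all the paper's own (much terser) proof says. The only extra content you supply is the explicit handling of the $V_1^{(a)}\oplus V_0^{(a)}$ component via Lemma~\ref{lem:projection} and axiom (M1), which the paper leaves implicit in the phrase ``follows from the fusion rules,'' and your computation of that term is correct.
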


\begin{proof}
Firstly,
\begin{align*}
a \cdot ((\alpha - \beta) \cdot \gamma) = a \cdot (\alpha \cdot \gamma) - a \cdot ( \beta \cdot \gamma).
\end{align*}
The result then follows from the fusion rules.  
\end{proof}

In particular, where possible, we choose $\alpha$, $\beta$ and $\gamma$ such that the product $(\alpha - \beta) \cdot \gamma$ is known, but $\beta \cdot \gamma$ is not known. In this way, we obtain a linear combination of terms $a \cdot x$ for the $x \in C$ occuring in $(\alpha - \beta) \cdot \gamma $, and $y \cdot z$ for $y \in \beta$ and $z \in \gamma$. Using these, we construct and solve a system of linear equations.

In some cases, there still remains unknown algebra and inner product values and so we run steps 3 - 5 repeatedly until all values have been found. In some rare cases, it is not possible to find all products and the program exits with the output \texttt{fail}. Where the algorithm has sucessfully completed and the representation in question is not known to embed into the Griess algebra then we run checks to ensure that the constructed algebra is indeed a Majorana algebra.

\section{The $n$-closed algorithm}
\label{sec:nclosed}

In a significant improvement on Seress' work, we have been able to implement a version of the algorithm which allows the construction of $n$-closed representations, theoretically for any given $n$. Of course, technology limits the value of $n$ in practice and, in fact, we do not know of any algebras which are $n$-closed for $n > 3$ which are not also $3$-closed with respect to some set of Majorana axes. We have implemented this part of the algorithm so that the first step is to attempt construction of the $2$-closed algebras as, in the vast majority of cases, this is sufficient. 

If the algorithm is unable to determine all algebra products from the $2$-closed part of $V$ then the user may pass the incomplete algebra outputted by the function \texttt{MajoranaRepresentation} to the function \texttt{NClosedMajoranaRepresentation} in order to attempt construction of the $3$-closed algorithm. In order to attempt construction of the $n$-closed part of the algebra, the user must pass the incomplete algebra to the function \texttt{NClosedMajoranaRepresentation} $n - 2$ times for $n > 2$. 

We describe the implementation of the function \texttt{NClosedMajoranaRepresentation}. We note that this method crucially relies on our use of signed permutations to encode the action of the group on the algebra, as described in Section \ref{sec:implementation}.

The main steps of the $n$-closed algorithm are the same as those for the $2$-closed algorithm. The function \texttt{NClosedMajoranaRepresentation} extends the spanning set of the algebra and adjusts the record encoding the algebra as described below. We then perform Steps 3 - 5 of the main algorithm until no more algebra products can be found. We describe this process in detail below. 

\paragraph{Input:} A record which has been outputted by the function \texttt{MajoranaRepresentation} where at least one of the entries in the component \texttt{algebraproducts} has the value \texttt{false}, indicating a product which has not yet been determined. 
\paragraph{Output:} The function has no output. We record additional values in the components of the input record.

We first record a list of indices \texttt{k} such that \texttt{algebraproducts[k]} has the value \texttt{false}. For each entry \texttt{k} of this list, if the value of \texttt{algebraproducts[k]} is still \texttt{false}, then we perform the following steps. 

\subsubsection*{Step 1 - Extend the spanning set of the algebra}
For all $i, j \in \{1, \ldots, |C|\}$, if the absolute value of \texttt{pairorbitlist[i][j]} is equal to \texttt{k}, then we add the ordered pair \texttt{[i,j]} to the list \texttt{coordinates}. This corresponds to adding the vector $C[i] \cdot C[j]$ to the spanning set of the algorithm.

\subsubsection*{Step 2 - Extend the known values} For all known products in \texttt{algebraproducts} and all vectors in \texttt{evecs} and \texttt{nullspace}, extend the length of the vectors to the new cardinality of \texttt{coordinates} by adding zeroes to the end. Set the value of the entry \texttt{algebraproducts[k]} to be the vector with a one at position \texttt{i} and zeroes elsewhere, where \texttt{i} is such that \texttt{coordinates[i]} is equal to \texttt{pairrepresentatives[k]}.

\subsubsection*{Step 3 - Extend the signed permutations} Let \texttt{p} be an element of \texttt{pairconjelements} or \texttt{conjelements} corresponding to an element $g \in G$. Let \texttt{l} be such that \texttt{coordinates[l]} is equal to \texttt{[i,j]} where \texttt{[i,j]} is one of the additional elements recorded in Step 1. 

Let \texttt{x} be the ordered list consisting of the abolute values of \texttt{p[i]} and \texttt{p[j]}. Then
\[
\texttt{p[l] = } \pm \texttt{Position(coordinates, x)}
\]  
where \texttt{p[l] > 0} if and only if \texttt{p[i]*p[j] > 0}. 

The generality of the method of signed permutations means that this method works regardless of whether the vectors corresponding to \texttt{coordinates[i]} and \texttt{coordinates[j]} are in the $2$-closed part of the algebra and are thus stored as group elements, or whether they are in the $n$-closed part for $n > 2$ and are stored as ordered pairs of integers.

\subsubsection*{Step 4 - New orbits} We can now use the orbits function developed for the main algorithm along with the signed permutations found in the previous step to find any additional orbits of $G$ on \texttt{coordinates} $\times$ \texttt{coordinates}. We add any new entries to \texttt{pairrepresentatives} and add new values to the matrices \texttt{pairorbitlist} and \texttt{pairconj}.

\subsubsection*{Step 5 - New eigenvectors} Finally, if $t \in T$ and $u, v \in V$ then, from Lemma \ref{lem:32evecs}, the vector $u \cdot v - (u \cdot v)^t$ is a $\frac{1}{2^5}$-eigenvector of the $2A$ axis $\psi(t)$. We use the signed permutations found in Step 3 to find and record any vectors of this form. 

\subsubsection*{Step 6 - The main steps} The extended algebra can now be passed through Steps 3 - 5 of Section \ref{sec:mainloop}. If at any point all products have been found then the algebra is complete and the function exits. If no more products can be found, but there are still missing values then we repeat the above Steps 1 - 6 with the next index \texttt{k} from our original list of unknown algebra product values.

If we have performed these steps for all values in this list and there still remains unknown products then the function exits and the algebra is still incomplete. The user may then decide to again run the function \texttt{NClosedMajoranaRepresentation} on the incomplete representation in order to attempt the construction of the $n$-closed algebra for the next value of $n$.

\section{Results}
\label{sec:results}

In Table \ref{tab:results} we give the basic details for some of the representations which we have constructed using our program. For each representation $(G, T, V)$, we give the following information
\begin{itemize}
\item the isomorphism type of $G$;
\item the cardinality of $T$, where $c_1 + c_2 + \ldots + c_k$ indicates that $T$ is the union of $k$ conjugacy classes of size $c_1, c_2, \ldots, c_k$;
\item a subset of the shape of $V$, showing only the values of $\Psi$ for the orbits where a choice has been made on the type of the corresponding dihedral algebra;
\item the cardinality of the spanning set $C$ of the $2$-closed part of the algebra, which consists of the $2A$, $3A$, $4A$ and $5A$ axes;
\item the dimension of the algebra $V$ as a vector space over $\mathbb{R}$;
\item whether the algebra is $2$-closed or not, all those which are not $2$-closed are $3$-closed. 
\end{itemize} 

\begin{table}
\begin{center}
\vspace{0.35cm}
\noindent
\begin{tabular}{|>{$}c<{$}|>{$}c<{$}>{$}c<{$}>{$}c<{$}>{$}c<{$}>{$}c<{$}>{$}c<{$}|} \hline 
&&&&&& \\
i & G & |T| & \textrm{Shape} & |C| & \textrm{dim.} & 2\textrm{-closed} \\ 
&&&&&& \\ \hline
&&&&&& \\
1 & S_4 & 6 + 3 & (2B,3C) & 12 & 12 & Y \\
2 & S_4 & 6 + 3 & (2A,3C) & 9 & 9 & Y \\
3 & S_4 & 6 + 3 & (2B,3A) & 16 & 25 & N \\
4 & S_4 & 6 + 3 & (2A,3A) & 13 & 13 & Y \\
5 & S_4 & 6 & (2B,3C) & 6 & 6 & Y \\
6 & S_4 & 6 & (2A,3C) & 9 & 9 & Y \\
7 & S_4 & 6 & (2B,3A) & 10 & 13 & N \\
&&&&&& \\
8 & A_5 & 15 & (2B, 3C) & 21 & 21 & Y \\
9 & A_5 & 15 & (2A, 3C) & 21 & 20 & Y \\
10 & A_5 & 15 & (2B, 3A) & 31 & 46 & N \\
11 & A_5 & 15 & (2A, 3A) & 31 & 26 & Y \\
12 & S_5 & 15 + 10 & (2A, 2A) & 41 & 36 & Y \\ 
&&&&&& \\ 
13 & L_3(2) & 21 & (2A, 3C) & 21 & 21 & Y \\
14 & L_3(2) & 21 & (2A, 3A) & 49 & 49 & Y \\
&&&&&& \\
15 & A_6 & 45 & (2A, 3C) & 81 & 70 & Y \\ 
16 & A_6 & 45 & (2A, 3A) & 121 & 76 & Y \\
17 & S_6 & 45 + 15 & (2A, 2B, 3A) & 136 & 91 & Y \\
18 & 3.A_6 & 45 & (2A, 3C, 3C) & 81 & 70 & Y\\
19 & 3.A_6 & 45 & (2A, 3A, 3C) & 141 & 105 & Y\\
20 & 3.A_6 & 45 & (2A, 3A, 3A) & 201 & 76 & Y \\
21 & 3.S_6 & 45 + 45 & (2A, 3A, 3C) & 187 & 136 & Y \\
&&&&&& \\
22 & (S_4 \times S_3) \cap A_7 & 18 + 3 & (2A, 3A) & 34 & 30 & Y \\ 
23 & A_7 & 105 & (2A, 3A)& 406 & 196 & Y\\
24 & S_7 & 105 + 21 & (2A, 2B, 3A) & 427 & 217 & Y \\
24 & 3.A_7 & 105 & (2A, 3A, 3C) & 336 & 211 & Y \\
25 & 3.A_7 & 105 & (2A, 3A, 3A) & 756 & 196 & Y \\
26 & 3.S_7 & 105 + 63 & (2A, 3A, 3C) & 400 & 254 & Y \\
&&&&&& \\
27 & L_2(11) & 55 & (2A, 3A) & 176 & 101 & Y \\ 
28 & L_3(3) & 117 & (2A, 3C, 3A) & 169 & 144 & Y \\ 
29 & M_{11} & 165 & (2A, 3A) & 781 & 286 & Y \\
&&&&&& \\
30 & (S_5 \times S_3) \cap A_8 & 30 + 15 & (3A) & 62 & 67 & N \\
&&&&&& \\
\hline
\end{tabular}
\end{center}
\label{tab:results}
\caption{Certain constructed Majorana representations}
\end{table}

\section{Further work}

Now that we have completely recovered Seress' $2$-closed results, our main aim will be to extend the algorithm so that we are able to construct Majorana representations of higher dimensions. In particular, the next step would be to construct Majorana representations of the groups $A_8$ and $M_{12}$. 

In the case of $M_{12}$, for example, the limiting factor comes in Step 5 of the main algorithm where we are required to solve systems of linear equations over the rationals with approximately 20,000,000 unknown variables. As such, we are particularly focusing on optimising the performance of the linear algebra required by the algorithm. 

In particular, Majorana algebras are defined over the real numbers and the linear algebra in the algorithm is performed over the rational numbers. This is challenging in many respects, not least because we often see an explosion in the coefficients of the matrices involved. However, it may well be of interest to study Majorana algebras defined over finite fields. One possible direction would therefore be to adapt our algorithm to work over generic fields. 

There are also more general aims in Majorana theory which we wish to address with this work. We have already found some new examples of Majorana algebras which are $3$-closed but not $2$-closed. We will be seeking further examples of $3$-closed algebras, as well as algebras which are $4$-closed but not $3$-closed. 

Finally, as mentioned earlier, it is an open problem as to whether there exist non-isomorphic Majorana algebras which share the same shape. If such an example were to exist as a Majorana representation then the algorithm would not be able to automatically construct it. However, our program would help in determining possible candidates for such behaviour and should also be able to construct the required algebras, albeit with some user input.

\end{document}